\newtheorem{theorem}{Theorem}[section]
\newtheorem{definition}{Definition}[section]
\newtheorem{remark}{Remark}[section]
\begin{document}

\begin{center}

{\Large Strong solution of the stochastic Burgers equation\\}

\end{center}

\vspace{0.3cm}

\begin{center}
{\large P. Catuogno  and
C. Olivera \footnote{Supported by FAEPEX under grant no. 1324/12}} \\

\textit{Departamento de
 Matem\'{a}tica, Universidade Estadual de Campinas,  \\ F. 54(19) 3521-5921 - Fax 54(19)
3521-6094\\ 13.081-970 -
 Campinas - SP, Brazil. e-mail: pedrojc@ime.unicamp.br ; colivera@ime.unicamp.br}
\end{center}
\vspace{0.3cm}

\begin{center}
\begin{abstract}

\noindent This work introduces a  pathwise
notion of solution    for the stochastic Burgers equation, in particular, our approach encompasses the Cole-Hopf solution.  
The developments are based on  regularization arguments from the theory of distributions.
\end{abstract}
\end{center}

\noindent {\bf Key words:} Stochastic Burgers equation, Generalized functions, Generalized stochastic processes, Colombeau algebras, 
Stochastic partial differential equations.

\vspace{0.3cm} \noindent {\bf MSC2000 subject classification:} 60H15, 46F99.

\section {Introduction}

\noindent

The aim of this paper is to study the existence  of solution to the 
stochastic Burgers equation of the following form:

\begin{equation}\label{burg}
 \left \{
\begin{array}{lll}
    \partial_t U(t, x) =  \ \triangle U(t, x) +    \ \partial_x   U^{2}(t,x) +   \partial_x  W(t,x),\\
 U(0, x) = \partial_x  f(x).
\end{array}
\right .
\end{equation}

\noindent where $W(t,x)$ is a space-time white noise.  

\noindent  There are many motivations for studying the stochastic Burgers equation. Several authors have indeed suggested to use the stochastic Burgers 
equation as a simple model for turbulence,  has also been proposed to study the dynamics of interfaces and 
numerous applications were found in astrophysics and statistical physics, see for example 
\cite{Beck} and  \cite{Wie}. We refer to \cite{Bre} and \cite{Hairer2} for a more detailed historical account of the stochastic Burgers equation
 from a purely mathematical point of view.


\noindent The main difficulty with the stochastic Burgers equation (\ref{burg}) is that the solutions do not take values in a function space but in 
a generalized function space. 
Thus it is necessary to give meaning to the non-linear term $\partial_x U^2$, because the usual
product makes no sense for arbitrary distributions. We recall that is not possible to define a product for arbitrary 
distributions with good properties, see \cite{colb} and \cite{ober2}.

\noindent In this article, we deal with product of
distributions via regularizations. This is, we approximate the distributions to be multiplied by smooth functions,
multiply the approximations and pass to
the limit (see for instance \cite{Miku},  \cite{calo} and  \cite{ober2}.

\noindent L. Bertini and G. Giacomin in \cite{BG}, proposed that $U(t, x)=\partial_x \ln Z(t,x)$ is the meaningful solution to the stochastic Burgers 
equation. Here $Z(t,x)$ denotes the solution of the stochastic heat equation with multiplicative noise. This is known as the Cole-Hopf solution for 
the stochastic Burgers equation. We recall that the stochastic heat equation with multiplicative noise is the It\^o equation,
\begin{equation}\label{heat}
 \left \{
\begin{array}{lll}
 dZ   =  \triangle Z \ dt  + Z dW \\
 Z(0, x) = e^{f(x)}.
\end{array}
\right .
 \end{equation}

\noindent The Hopf-Cole solution is believed to be the correct physical solution for (\ref{burg}). However
up to recently a rigorous notion of solution to the stochastic Burgers equation was lacking.  In \cite{AS2}, S. Assing introduces a weak solution in a probabilistic sense for the   equation (\ref{burg}). The idea is to approximate the Cole-Hopf solution by the density fluctuations in weakly asymmetric exclusion. In \cite{Jara}, P. Gon\c{c}alves and M. Jara considered a similar type of solution.

\noindent In this article we introduce a nice space of generalized stochastic processes in order to prove that
the Cole-Hopf solution is a strong solution to the  equation (\ref{burg}). We also show that the Cole-Hopf solution satisfies a certain type of stability.
   
Our  space of generalized stochastic processes 
looks like a generalized stochastic process space in the sense  of It\^o-Gelfand-Vilenkin, see  \cite{Gel4}  and \cite{ITO}. A main ingredient in our 
approach is the  use of regularization techniques to define nonlinear operations
in the theory of distributions, we refer the reader to  \cite{Miku} and \cite{ober2} for the background material.

\noindent {\bf 
Finally, we mention  that the Cole-Hopf solution
does not make sense in the  multidimensional case  since the solution of the  stochastic heat equation
is not a standard  stochastic process. It is realized as a generalized stochastic process in the space of stochastic Hida distribution, see for example \cite{HOBZ}. Thus, we  expect solutions of  Stochastic Burgers equation
  only in the sense of  Colombeau's generalized functions. We refer the reader to
  \cite{albe}, \cite{CO1}, \cite{CO2}, \cite{ober4}  and \cite{Russo} for applications of this theory in stochastic analysis. 
 }

\noindent The article is organized as follows: Section 2 reviews some basic facts on the standard cylindrical Wiener process. Section 3, we give a 
notion of random generalized functions, we introduce a new concept of solution for the stochastic Burgers equation and we prove that the Cole-Hopf solution solves  (\ref{burg}). Also we prove that its has certain property  of stability.


\section{Cylindrical Wiener process }

\noindent Let   $\{W(t,\cdot): t\in [0,T]\}$ be a
standard cylindrical Wiener process in $L^{2}(\mathbb{R})$; it is canonically realized as a family of
continuous processes satisfying:
\begin{enumerate}
\item For any $\varphi \in L^2( \mathbb{R})$, $\{W_t (\varphi), t \in [0,T] \}$ is a Brownian motion with variance $t\int\varphi^2(x)~dx$,
\item For any $\varphi_1,\varphi_2 \in L^2( \mathbb{R})$ and  $s, t \in [0,T]$,
\[
\mathbb{E}(W_{s}(\varphi_1)(W_{t}(\varphi_2))=(s \wedge t) \int \varphi_1(x) \varphi_2(x) ~dx.
\]
\end{enumerate}

\noindent Let $\{ \mathcal{F}_t : t \in [0,T]\}$ be the $\sigma$-field generated by the $P$-null sets and the random variables
$W_{s}(\varphi)$, where $\varphi\in \mathcal{D}( \mathbb{R})$ and  $s \in [0,t]$. The predictable $\sigma$-field is the $\sigma$-field in
$[0, T] \times \Omega$  generated by the sets $(s, t]\times A$ where $A\in\mathcal{F}_s$ and $0 \leq s < t \leq T$.

\noindent Let $\{v_j : j\in \mathbb{N} \}$ be a complete orthonormal basis of  $L^{2}(\mathbb{R})$. For any predictable
process $g \in L^{2}(\Omega \times [0, T ], L^2( \mathbb{R}) )$ it turns out that the following series is convergent in
$L^{2}(\Omega, \mathcal{F}, P)$ and the sum does not depend on the chosen orthonormal system:

\begin{equation}\label{inteCy}
\int_{0}^{T}  g_{t} W_{t}  := \sum_{j=1}^{\infty} \int_{0}^{T}  (g_t, v_{j}) dW_t(v_{j}).
\end{equation}

\noindent  We notice that each summand in the above series is a classical It\^o integral with respect to a standard
Brownian motion, and the resulting stochastic integral is a real-valued random variable. The independence of the terms in the series
(\ref{inteCy}) leads to the isometry property

\[
\mathbb{E}(| \int_{0}^{T}  g_{s} ~ dW_{s} |^{2})
   = \mathbb{E}( \int_{0}^{T}  \int | g_s(x) |^{2}~dx~ds).
\]

\noindent See   \cite{Da} for properties of the cylindrical Wiener process and
stochastic integration.

\noindent The mollifier cylindrical Wiener process is defined by:
\begin{equation}
W_{t}^{n}(x):= W_{t}(n\rho(n(x-\cdot)))
\end{equation}
where $\rho :  \mathbb{R} \rightarrow \mathbb{R}$ is an infinitely differentiable function with
compact support such that $\int \rho(x) \ dx=1$. 

The mollifier Wiener process satisfies:
\begin{enumerate} 
\item The covariance of the mollifier Wiener process is given by 
\begin{equation}\label{reguCy}
\mathbb{E}[W_{t}^{n}(x) W_{s}^{n}(y)]= s\wedge t  \int C_n(x-y)
\end{equation}
where $C_n(z)=\int \delta_{n}(z-u) \delta_{n}(-u) du$ and $\delta_n(z)=n \rho (nz)$.

\item The quadratic variation of $W_{t}^{n}(x)$ is given by
\begin{equation}\label{reguCy1}
< W^{n}(x)>_t= Cnt
\end{equation}
where $C =  \int \rho^2(-x)dx$.

\item The mollifier Wiener process is an approximation to the cylindrical Wiener process. For all $\varphi \in L^2(\mathbb{R})$,
\begin{equation}
 \lim_{n \rightarrow \infty} \int W^n_t(x) \varphi(x)dx = W_t(\varphi).
\end{equation}
In the case that $\varphi$ has compact support the above convergence is a.e..

\end{enumerate}




\section{Solving the stochastic Burgers equation}


\noindent We denote by $\mathcal{D}((0,T) \times \mathbb{R})$ the space of
the infinitely differentiable functions with
compact support in   $(0,T) \times \mathbb{R}$,  and
$\mathcal{D}^{\prime}((0,T) \times  \mathbb{R})$ its dual.

\begin{definition} Let $\mathbf{D}$ be the  space of functions $T :  \Omega \ \rightarrow \  \mathcal{D}^{\prime}((0,T) \times \mathbb{R})$ such that
$<T,\varphi> $ is  a  random   variable for all  $\varphi \in   \mathcal{D}((0,T) \times \mathbb{R})$.    \
The elements of $\mathbf{D}$ are called random generalized
functions.
\end{definition}

The initial condition of the stochastic Burgers equation requires the usage of the notion of  section of a distribution in the sense of S. Lojasiewicz, see \cite{lo} and \cite{ober2}. For convenience of the reader, we present the relevant definitions.


\begin{definition}  A strict delta net is a net $\{  \rho_{\varepsilon} : \varepsilon >0 \}$ of $\mathcal{D}((0,T))$ such that it satisfies:
 \begin{enumerate}
\item
$\lim_{\varepsilon \rightarrow 0} supp(\rho_{\epsilon})= \{ 0 \}$.

\item For all $\varepsilon >0$,
 $\int \rho_{\varepsilon}(t)  \ dt =1$.
\item $\sup_{\varepsilon >0} \int |\rho^{\varepsilon}(t)| \ dt < \infty$.
\end{enumerate}
\end{definition}

 
\begin{definition} A distribution $H   \in  \mathcal{D}^{\prime}((0,T) \times \mathbb{R})$ has a section
$U \in \mathcal{D}^{\prime}(  \mathbb{R}) $ at $t=0$ if for all $\varphi \in \mathcal{D}( \mathbb{R})$ and
all strict delta net $\{  \rho_{\varepsilon} : \varepsilon >0 \}$,

\[
\lim_{\varepsilon \rightarrow 0}<H,  \rho_{\epsilon} \varphi > \ = \ <U,  \varphi >.
\]
\end{definition}

\subsection{Existence of generalized solution}
 {\bf 
\noindent We say that a random field $\{ S(t,x) : t \in [0,T],~x \in \mathbb{R} \}$ is a spatially dependent semimartingale if for
each $x\in \mathbb{R}$,
$\{S(t,x): t \in [0,T]\}$ is a semimartingale in relation to the same filtration $\{ \mathcal{F}_t : t \in [0,T] \}$. If
$S(t,x)$ is a $C^{\infty}$-function of $x$ and continuous in $t$ almost everywhere,  it is called  a $C^{\infty}$-semimartingale.  See \cite{Ku} for a rigorous study of spatially depend semimartingales
and applications to  stochastic differential equations.
} Now, following ideas of regularization and passage to the limit, we
introduce a new concept of solution for the stochastic Burgers equation.

\begin{definition}\label{solu3}
We say that $U\in  \mathbf{D}$
is a {\it generalized  solution} of the equation  (\ref{burg})
if
\begin{enumerate}
\item There exists a sequence of $C^{\infty}$-semimartingales $\{ U_n : n \in \mathbb{N} \}$ such that
$ U=\lim_{n \rightarrow \infty}U_n$
and there exists
$\lim_{n\rightarrow\infty}   \partial_x U_n^{2}$
in $\mathcal{D}^{\prime}((0,T) \times  \mathbb{R})$   {\bf  almost surely for $\omega \in \Omega$}.

\item For all $\varphi \in \mathcal{D}((0,T) \times  \mathbb{R})$,
\[
< U, \partial_t\varphi > +   < \triangle U, \varphi >  +  < \partial_x  U ^{2},  \varphi >  +  \int_{0}^{T}   \partial_x \varphi(t,\cdot) dW_t=0
\]
where $\partial_x U ^{2}:=\lim_{n\rightarrow\infty} \partial_x U_n^{2}$.

\item {\bf There exists a section of $U$ at
$t = 0$ and is equal  to $\partial_x f$ }.
\end{enumerate}

\end{definition}

\begin{theorem}\label{teoSolu2} Let  $f\in  C_{b}^{\infty}(\mathbb{R})$ and $Z$ be a solution of the stochastic heat equation (\ref{heat}). 
Then $U=\partial_x \ln Z$ is a generalized solution of the stochastic Burgers equation (\ref{burg}).

\end{theorem}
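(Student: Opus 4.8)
The plan is to realize the Cole--Hopf candidate $U = \partial_x \ln Z$ as the limit of an explicit sequence of $C^\infty$-semimartingales built from mollifications of $Z$, and then verify the three conditions of Definition~\ref{solu3} directly. First I would set $Z_n(t,x) := Z(t,\cdot) * \delta_n(x)$ (the spatial mollification at scale $1/n$), which is smooth in $x$ and, since $Z$ is a $C^\infty$-semimartingale solving \eqref{heat}, is itself a $C^\infty$-semimartingale. Because $f \in C_b^\infty$ implies $Z$ is strictly positive and locally bounded away from $0$ (this positivity of the stochastic heat equation with multiplicative noise is classical and I would cite it), $\ln Z_n$ is well defined for large $n$, and I would take $U_n := \partial_x \ln Z_n = \partial_x Z_n / Z_n$. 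The convergence $U_n \to U$ in $\mathcal{D}'((0,T)\times\mathbb{R})$ almost surely follows from $Z_n \to Z$ locally uniformly (hence $\ln Z_n \to \ln Z$ locally uniformly, staying away from $0$) together with continuity of $\partial_x$ on distributions.

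Next I would apply the It\^o formula for $C^\infty$-semimartingales (the theory in~\cite{Ku}) to compute $d(\ln Z_n)$ and then $d(\partial_x \ln Z_n)$. From \eqref{heat} one gets, at the mollified level, $dZ_n = \triangle Z_n\,dt + (Z\,dW)*\delta_n$; writing $\dot W^n$ informally for the mollified noise, $d(\ln Z_n) = \frac{\triangle Z_n}{Z_n}dt - \tfrac12 \frac{d\langle Z_n\rangle_t}{Z_n^2} + \frac{1}{Z_n}\,d((Z\,dW)*\delta_n)$, and the quadratic-variation correction term is the precise source of the nonlinearity: a direct computation shows $\frac{\triangle Z_n}{Z_n} - \frac12 \frac{(\partial_x Z_n)^2}{Z_n^2}$ is, up to the martingale part, $\triangle(\ln Z_n) + \frac12(\partial_x \ln Z_n)^2$, so that after applying $\partial_x$ one obtains an equation of exactly the form $dU_n = \triangle U_n\,dt + \partial_x U_n^2\,dt + \partial_x(\text{mollified noise})$ plus error terms that vanish as $n\to\infty$. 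The existence of $\lim_n \partial_x U_n^2$ in $\mathcal{D}'$ a.s.\ then comes for free from the convergence $U_n \to U$ with $U_n^2 \to (\partial_x \ln Z)^2$ locally uniformly, establishing Condition~1 and simultaneously identifying $\partial_x U^2 = \partial_x(\partial_x\ln Z)^2$.

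For Condition~2 I would test the mollified equation against $\varphi \in \mathcal{D}((0,T)\times\mathbb{R})$, integrate by parts in $t$ and $x$ to move derivatives onto $\varphi$, and pass to the limit $n\to\infty$: the deterministic terms converge by the distributional convergences already in hand, and the stochastic term $\int_0^T \langle \partial_x \varphi(t,\cdot), \dot W^n_t\rangle\,dt$ converges to $\int_0^T \partial_x\varphi(t,\cdot)\,dW_t$ by property~3 of the mollifier Wiener process in Section~2 together with the It\^o isometry (using that $\partial_x\varphi(t,\cdot) \in L^2(\mathbb{R})$ with compact support, so the convergence holds in $L^2(\Omega)$). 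For Condition~3, since $Z$ is continuous in $t$ with $Z(0,x) = e^{f(x)}$, the map $t \mapsto \ln Z(t,\cdot)$ is continuous into, say, $C(\mathbb{R})$ near $t=0$, so $U(t,\cdot) = \partial_x \ln Z(t,\cdot) \to \partial_x f$ in $\mathcal{D}'(\mathbb{R})$; testing against a strict delta net $\rho_\varepsilon \varphi$ and using condition~3 of the strict delta net (uniform $L^1$ bound) plus dominated convergence yields $\lim_{\varepsilon\to0}\langle U, \rho_\varepsilon\varphi\rangle = \langle \partial_x f, \varphi\rangle$.

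The main obstacle I anticipate is the It\^o-formula bookkeeping in the second step: one must handle the mollified multiplicative noise term $(Z\,dW)*\delta_n$ carefully, control its quadratic variation (this is where \eqref{reguCy1}, the $Cn t$ blow-up, signals that the naive pointwise mollification of the noise is delicate), and show that all the discrepancy terms between $\triangle(\ln Z_n) + \tfrac12(\partial_x\ln Z_n)^2$ and the exact It\^o drift of $\ln Z_n$ converge to $0$ in $\mathcal{D}'$ almost surely. The clean way around the quadratic-variation blow-up is to keep the martingale part as a genuine stochastic integral against $W$ throughout (never differentiating $W$ in $t$) and only mollify in $x$, so that the $C_n$ and $C$ constants enter as bounded spatial quantities rather than as a divergent time rate; then the limit passage in the stochastic term is exactly the content of Section~2's property~3, and no renormalization is needed because the nonlinearity $(\partial_x\ln Z)^2$ is already a well-defined locally bounded function. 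Everything else is routine distribution theory and standard properties of the stochastic heat equation.
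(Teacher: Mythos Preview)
Your approach rests on a misconception about the regularity of $Z$: the solution of (\ref{heat}) driven by space-time white noise is only H\"older continuous in $x$ (of order $\tfrac12-\varepsilon$), not $C^1$, so $\partial_x\ln Z$ is a genuine distribution and your assertion that ``$(\partial_x\ln Z)^2$ is already a well-defined locally bounded function'' is false. This is exactly the difficulty the paper is set up to confront; the nonlinearity has no classical meaning and must be \emph{defined} by the limiting procedure. The error propagates into the It\^o step: with $Z_n=Z\ast\delta_n$ the quadratic variation at a fixed $x$ is $d\langle Z_n(\cdot,x)\rangle_t=\big(\int Z(t,y)^2\delta_n(x-y)^2\,dy\big)\,dt$, which is of order $Cn$ but, unlike the spatial constant $\tfrac{C}{2}nt$ appearing in the paper's (\ref{KPZA2}), depends on $x$ through $Z$ and therefore does \emph{not} vanish under $\partial_x$; since $Z$ is only $C^{1/2-}$ this residual term in fact diverges. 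Likewise the martingale increment $\tfrac{1}{Z_n}(Z\,dW)\ast\delta_n$ is not $dW^n$, so its weak limit against $\partial_x\varphi$ is not immediately the required stochastic integral.

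The paper sidesteps all of this by mollifying the \emph{noise} rather than $Z$: it lets $Z_n$ solve the regularized heat equation (\ref{heat2}) driven by $W^n$. The It\^o correction is then exactly the spatial constant $\tfrac{C}{2}nt$, annihilated by $\partial_x$, and the martingale part is exactly $W^n$. Most importantly, the convergence of $\partial_x U_n^2$ is never argued directly: it is \emph{forced} by the convergence of the other three terms in (\ref{KPZA3}) (using the Bertini--Cancrini result $Z_n\to Z$ uniformly on compacts for the first two, and the approximation property of $W^n$ for the stochastic term), and that limit is then \emph{taken as the definition} of $\partial_x U^2$. Your plan to obtain the nonlinear limit directly from $U_n^2\to(\partial_x\ln Z)^2$ locally uniformly cannot work, because the right-hand side does not exist as a function.
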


\begin{proof}  Let us denote by $H_{n}(t,x)$ the process $\ln   Z_{n}(t,x)$, where $Z_n$ is the solution of the regularized stochastic heat equation in
the It\^o sense
\begin{equation}\label{heat2}
 \left \{
\begin{array}{lll}
dZ_{n}  & = & \triangle Z_{n} \ dt  + Z_{n} \ dW^{n}, \  \\
Z_n(0,x) & = &  e^{f(x)}.
\end{array}
\right .
\end{equation}
 {\bf 
We observe that the solution of the equation (\ref{heat2}) is understood in a mild sense, this is, $Z_{n}$ satisfies the equation

\[
 Z_{n} =  G_t\ast f 
+   \int_{0}^{t}  (G_{t-s}\ast Z_{n})  dW_{t}^{n} 
\]

\noindent where   $G_t=G(t,.)$ is   the fundamental solution  the heat equation. This construction is due to L. Bertini and G. Giacomin \cite{BC} pp. 1884. 
 The solution $Z(t,x)$ of the stochastic heat equation (\ref{heat}) is too  understood in a mild sense, see  Definition 2.1  and Theorem 2.2 of \cite{BC}.
}

\noindent By It\^o formula and (\ref{reguCy1}) we have

\begin{equation}\label{KPZA2}
H_{n} =   f  +
 \int_{0}^{t}  \triangle H_{n} \ ds
+   \int_{0}^{t} (\partial_x H_{n})^{2} \ ds
+     W_{t}^{n} - \frac{ C}{2} \ nt .
\end{equation}

\noindent Let $U_{n}(t,x) = \partial_x \ln  Z_{n}(t,x)$. Multiplying (\ref{KPZA2}) by $\partial_x  \partial_t \varphi(t,x)$, where $\varphi \in \mathcal{D}((0,T) \times  \mathbb{R})$,
and integrating in $(0,T)\times \mathbb{R}$ we obtain that
\begin{equation}\label{KPZA3}
< U_{n}, \partial_t \varphi > +
   < \triangle U_{n}, \varphi > +  < \partial_x  U_{n}^{2},  \varphi >
+    \int_{0}^{T} ( \partial_x \varphi(t,\cdot) * \delta_{n} )  \  dW_{t} =0 .
\end{equation}
We observe that   $ Z_{n}$ converge to  $Z$    uniformly on compacts of $(0,T) \times \mathbb{R}$, see  
L. Bertini and N. Cancrini \cite{BC}, Theorem 2.2. Thus

  \begin{equation}\label{C1}
 \lim_{n \rightarrow \infty}< U_{n}, \partial_t \varphi> = < U, \partial_t \varphi>
   \end{equation}
and
  \begin{equation}\label{C2}
\lim_{n \rightarrow \infty}< \triangle U_{n}, \varphi> =   < \triangle U, \varphi >.
\end{equation}

\noindent We recall that $\int_{0}^{T}   \varphi(t,\cdot) \ dW_{t}$ defines a continuous linear functional from 
$\mathcal{D}((0,T) \times \mathbb{R})$ to $\mathbb{R}$, see  K. Schaumloffel \cite{Sch}. Then

\begin{equation}\label{C3}
\lim_{n \rightarrow \infty}  \int_{0}^{T} (\partial_x\varphi(t,\cdot) * \delta_{n})  \  dW_{t} = \int_{0}^{T}   \partial_x \varphi \ dW_{t}.
\end{equation}

\noindent  From the equation (\ref{KPZA3})
and the  convergences   (\ref{C1}), (\ref{C2}) and (\ref{C3}) we have that for all $ \varphi\in \mathcal{D}[0,T) \times \mathbb{R})$,

\[
 \int_{0}^{T} \int_{ \mathbb{R}}   \partial_x U_{n}^{2} \  \varphi(t,x)  \ dtdx \
\]
converges and defines a linear functional in $\mathcal{D}^{\prime}((0,T) \times \mathbb{R})$ . Thus the  nonlinearity
\[
<\partial_x ( U )^{2} , \varphi> := \lim_{n \rightarrow \infty} \int_{0}^{T} \int_{ \mathbb{R}}   \partial_x ( U_{n})^{2} \  \varphi(t,x)  \ dtdx \
\]
is well defined.

\noindent From the  continuity of $ Z(t,x) $, see for instance \cite{BC},  we observe that

\begin{eqnarray*}
\lim_{\varepsilon\rightarrow 0} \int \int_{0}^{T} \partial_x \ln \  Z(t,x) \  \rho_{\varepsilon}(t) \ dt \   \varphi(x) \ dx  & = & 
-  \int f(x) \  \partial_x \varphi(x)\ dx \\
& = & \int   \partial_x  f(x) \   \varphi(x)\ dx    
\end{eqnarray*}
for all $\varphi \in \mathcal{D}( \mathbb{R})$ and for all strict delta net $\{ \rho_{\varepsilon}: \varepsilon >0 \}$. Thus  we conclude that $U$ is a generalized  solution   for the  problem (\ref{burg}).

\end{proof}

 {\bf 
\begin{remark}
 In our formulation of the equation  (\ref{burg}) the initial condition is $U |_{\{t=0\}} = \partial_x  f(x)$. Thus if $f$ and $g$ are functions such that 
$ \partial_x  f(x)= \partial_x  g(x)$  the solution of equation (\ref{burg}) are the same. This is, we are identifying
inicial conditions that have the same derivative. 
\end{remark}
}

 {\bf 
\begin{remark} The uniqueness problem is interesting and probably impossible to solve in this setting.
The problem is that different approximations of the stochatic heat equations could have different limits, we refer to the reader to section 2.4
of \cite{Hairer}.
\end{remark}
}

\subsection{Stability of the Cole-Hopf solution}

A well-known  statement is that if a PDE is well-posed, then the
solutions to every reasonable sequence of approximating PDEs  should
converge to it. However, in SPDEs this  statement is ambiguous  since
stochastic integrals are too irregular to be defined pathwise in an unambiguous way. See \cite{Hairer3} for interesting comments in relation to 
this issue in SPDEs.

\noindent The Cole-Hopf solution of the stochastic Burgers equation has this property of stability for approximations obtained as solutions of the  
stochastic Burgers equation driven by a  mollifier Wiener process. 













\begin{theorem} Let  $f\in  C_{b}^{\infty}(\mathbb{R})$. Then the Cole-Hopf solution $U=\partial_x \ln~Z$ of the stochastic Burgers equation with initial condition 
$\partial_x f$ satisfies the following stability property:

\noindent If $V_n$ are semimartingales such that
\[
 V_{n} =   \partial_x f  + \int_{0}^{t}  \triangle V_{n} \ ds
 \  \int_{0}^{t} \partial_x  V_{n}^{2} \ ds
+  \partial_x   W_{t}^{n} 
\]
then $\lim_{n \rightarrow \infty} V_{n}=U$ in $\mathcal{D}^{\prime}((0,T) \times  \mathbb{R})$.

\end{theorem}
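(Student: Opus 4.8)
The plan is to reduce the claim to two ingredients that are essentially already available: the uniqueness of the \emph{regularized} stochastic heat equation (\ref{heat2}) (for each fixed $n$ the driving noise $W^{n}$ is smooth in $x$, so that equation is genuinely well posed, unlike its ill-posed limit), and the convergence $Z_{n}\to Z$ used in the proof of Theorem \ref{teoSolu2}. First I would observe that the process $U_{n}:=\partial_{x}\ln Z_{n}$ is itself a semimartingale of exactly the type figuring in the statement: differentiating (\ref{KPZA2}) in $x$ and noting that the It\^o correction $-\tfrac{C}{2}nt$ is constant in $x$ and hence annihilated by $\partial_{x}$, one obtains
\[
U_{n}=\partial_{x}f+\int_{0}^{t}\triangle U_{n}\,ds+\int_{0}^{t}\partial_{x}U_{n}^{2}\,ds+\partial_{x}W_{t}^{n},
\]
which is precisely the equation assumed for $V_{n}$. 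Thus it suffices to prove $V_{n}=U_{n}$ for every fixed $n$ and then pass to the limit.

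For the identification $V_{n}=U_{n}$ I would examine the difference $D_{n}:=V_{n}-U_{n}$. Subtracting the two equations the noise terms cancel exactly, and $D_{n}$ solves the \emph{noise-free} parabolic equation
\[
\partial_{t}D_{n}=\triangle D_{n}+\partial_{x}\bigl((V_{n}+U_{n})\,D_{n}\bigr),\qquad D_{n}(0,\cdot)=0 ,
\]
which for each fixed $\omega$ is a linear parabolic PDE with smooth, locally bounded coefficient $V_{n}+U_{n}$. A pathwise energy/Gronwall estimate then forces $D_{n}\equiv 0$; equivalently, one may run the inverse Cole--Hopf substitution $V_{n}=\partial_{x}\ln Y_{n}$, which (modulo a spatially constant drift and the It\^o term produced by exponentiating) turns the problem into the \emph{linear} equation (\ref{heat2}), whose mild solution is unique. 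Either way, $V_{n}=\partial_{x}\ln Z_{n}=U_{n}$.

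Finally, by Bertini--Cancrini (\cite{BC}, Theorem 2.2) $Z_{n}\to Z$ uniformly on compact subsets of $(0,T)\times\mathbb{R}$, and $Z$ is continuous and strictly positive; hence $\ln Z_{n}\to\ln Z$ locally uniformly, and therefore $V_{n}=\partial_{x}\ln Z_{n}\to\partial_{x}\ln Z=U$ in $\mathcal{D}^{\prime}((0,T)\times\mathbb{R})$, exactly as in the convergences (\ref{C1})--(\ref{C2}). The main obstacle is the middle step: the hypothesis only asserts that $V_{n}$ is a ``semimartingale'', so before the uniqueness argument can be made rigorous one must argue that $V_{n}$ is in fact a $C^{\infty}$-semimartingale (or at least $H^{1}_{loc}$ in $x$), and one must control possible growth at spatial infinity on the unbounded domain $\mathbb{R}$ — or, in the Cole--Hopf route, control the spatially constant drift and the quadratic-variation correction created when passing to the antiderivative $Y_{n}$. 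That is where the real work lies; the rest is a transcription of the estimates already used for Theorem \ref{teoSolu2}.
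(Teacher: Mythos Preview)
Your second alternative---running the inverse Cole--Hopf substitution---is exactly the paper's proof, and the paper makes explicit the ``spatially constant drift and It\^o term'' you leave vague: writing $V_{n}=\partial_{x}H_{n}$, the exponential $G_{n}:=e^{H_{n}}$ solves the \emph{Stratonovich} equation $dG_{n}=\triangle G_{n}\,dt+G_{n}\circ dW^{n}$, i.e.\ the It\^o equation $dG_{n}=\triangle G_{n}\,dt+G_{n}\,dW^{n}+nC\,G_{n}\,dt$; since $Z_{n}e^{Cnt}$ solves the same linear equation with the same data, uniqueness gives $G_{n}=Z_{n}e^{Cnt}$, whence $V_{n}=\partial_{x}\ln G_{n}=\partial_{x}\ln Z_{n}$, the factor $e^{Cnt}$ being annihilated by $\partial_{x}$. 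Your primary route---subtracting to get $D_{n}$ and running a pathwise energy/Gronwall argument on the resulting variable-coefficient linear parabolic equation---is genuinely different: it stays at the level of the Burgers equation and avoids introducing the antiderivative $H_{n}$, but in exchange it needs the spatial regularity and growth control on $V_{n}+U_{n}$ over all of $\mathbb{R}$ that you yourself flag as the ``real work'', whereas the Cole--Hopf route reduces everything to uniqueness for a \emph{linear} mild equation and an explicit scalar factor. The final convergence step via Bertini--Cancrini is identical in both arguments.
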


\begin{proof}  We observe that $ V_{n}=  \partial_x  H_n$ where 
$ H_n$ satisfies

\[
 H_{n} =    f  + \int_{0}^{t}  \triangle H_{n} \ ds
 \  \int_{0}^{t} (  \partial_x  H_{n})^{2} \ ds
+     W_{t}^{n} .
\]

We have that $G_n=e^{H_{n}}$  verifies the 
 following  Stratonovich equation 

\begin{equation}\label{heatStra}
 \left \{
\begin{array}{lll}
dG_{n}  & = & \triangle G_{n} \ dt  + G_{n} \circ dW^{n}   \\
G_n(0,x) & = &  e^{f(x)},
\end{array}
\right .
\end{equation}

\noindent or equivalently  the It\^o equation

\begin{equation}\label{heatito3}
 \left \{
\begin{array}{lll}
dG_{n}  & = & \triangle G_{n} \ dt  + G_{n}  dW^{n} + n \ C \  G_{n}  dt \  \\
G_n(0,x) & = &  e^{f(x)}.
\end{array}
\right .
\end{equation}

\noindent A trivial calculation shows that $ G_n =  Z_n e^{C n t}$ where $ Z_n $  is the solution of the equation (\ref{heat2}). Then

\[
H_{n} =\ln \  \left(Z_n e^{C\ nt}\right).
\]

Thus

\[
V_n =  \partial_x H_n = \partial_x \ln \ Z_n. 
\]

\noindent By continuity,  we have that $V_n$ converges to  $U=\partial_x \ln~Z$ in  $\mathcal{D}^{\prime}((0,T) \times  \mathbb{R})$.

\end{proof}





\begin{thebibliography}{9999}
\bibitem{albe} S. Albeverio, Z. Haba, F. Russo. {\it A two-space
dimensional semilinear heat equation perturbed by (Gaussian) white
noise}. Probab. Theory Related Fields. 2001, 121, 319-366.


\bibitem{Miku}
P. Antosik, J. Mikusi\'nski, R. Sikorski. {\it Theory of
distributions. The sequential approach}. Elsevier Scientific
Publishing Company, 1973.



\bibitem{AS}
S. Assing.  {\it A pregenerator for Burgers equation forced by conservative noise}.  Commun.
Math. Phys. 2002, 225, 611-632.

\bibitem{AS2}
S. Assing. {\it A rigorous equation for the Cole-Hopf solution of the
conservative KPZ dynamics}. 2012, arXiv:1109.2886 .



\bibitem{Beck}
J. Bec,  K. Khanin. {\it Burgers turbulence}.  Phys. Rep. 2007, 447, 1-66.




\bibitem{BG}
L. Bertini, G. Giacomin. {\it Stochastic Burgers and KPZ equations from particle
systems}. Comm. Math. Phys. 1997, 183, 571-607.






\bibitem{BC}
L. Bertini, N. Cancrini.  {\it The stochastic heat equation: Feynman-Kac formula and intermittence}.
Journal of Statistical Physics. 1995, 78, 5-6, 1377-1401.



\bibitem{Bre}
Z. Brzezniak, B. Goldys, M. Neklyudov. {\it Multidimensional stochastic Burgers equation}, 2012, arXiv:1202.3230v1. 








\bibitem{CO1}
P. Catuogno,  C. Olivera. {\it Tempered generalized functions and
Hermite expansions}. Nonlinear Analysis. 2012, 74, 479-493.


\bibitem{CO2}
P. Catuogno,  C. Olivera.{\it On Stochastic generalized
functions}. Infinite Dimensional Analysis, Quantum
Probability and Related Topics. 2011,  14, 2, 237-260.



\bibitem{calo}
P. Catuogno, S. Molina,  C. Olivera. {\it On Hermite
representation of distributions and products}. Integral Transforms
and Special Functions. 2007 18, 4, 233-243.




\bibitem{colb}
J. Colombeau. {\it Elementary introduction to new generalized
functions}. Math. Studies 113, North Holland, 1985




\bibitem{Da}
R. Dalang, L. Quer-Sardanyons. {\it Stochastic integrals for spde's: a comparison}, Expositiones Mathematicae. 2011,
29, 1, 67-109.


\bibitem{Gel4}
I. Gelfand, N. Vilenkin. {\it Generalized functions IV }. Academic Press, 1964.


\bibitem{Jara}
P. Goncalvez, M. Jara. {\it Universality of KPZ equation}. 2010, arXiv:1003.4478v1.




\bibitem{Hairer3}
M. Hairer.{\it  Singular perturbations to semilinear stochastic heat equations }. Probability Theory and Related Fields. 2012,
 152, 1-2, 265-297.


\bibitem{Hairer2}
M. Hairer, J. Voss. {\it  Approximations to the stochastic Burgers equation}. J. Nonlinear Sci. 2011, 21, 6, 897-920.

\bibitem{Hairer}
M. Hairer: {\it Solving the KPZ equation}. arXiv:1109.6811v3, Ann. Math. (2013). To appear..

\bibitem{HOBZ}
H. Holden, B. Oksendal, J. Uboe,  T. Zhang. {\it Stochastic
partial differential equations. A modeling, white noise functional
approach}. Birkhauser, 1996.



\bibitem{ITO}
K. It\^o. {\it Stationary random distributions}. Mem. Coll. Sci. Kyoto
Univ. Ser. A. 1954, 28, 209-223.







\bibitem{Ku}
H. Kunita: {\it Stochastic flows and stochastic differential
equations}. Cambridge University Press, 1990.







\bibitem{lo}
S. Lojasiewicz. {\it Sur la valeur et la limite d\'une distribution en un point}.
Studia Math. 1957, 16, 1-36.




\bibitem{ober2}
M. Oberguggenberger. {\it Multiplication of distributions and
applications to partial differential equations}.  Pitman Research
Notes in Math. Series 259. Ed. Longman Science and Technology, 1993.















\bibitem{ober4} M. Oberguggenberger,  F. Russo. {\it Nonlinear SPDEs:
Colombeau solutions and pathwise limits.} Stochastic analysis and
related topics, VI (Geilo, 1996),  Progr. Probab., 42,
Birkhauser Boston, Boston, MA,  1998.

\bibitem{Russo}
F. Russo  {\it Colombeau generalized functions and stochastic
analysis}. Stochastic analysis and applications in physics. Edit. A. Cardoso, M. de  Faria, J. Potthoff, R. Seneor,
L. Streit. Adv. Sci. Inst. Ser. C Math. Phys. Sci., Kluwer Acad. Publ. 1994.


\bibitem{Sch}
K. Schaumloffel .{\it White noise in space and time as the time-derivative of a cylindrical Wiener process }.
Stochastic Partial Differential Equations and Applications II
Lecture Notes in Mathematics. 1989, 1390, 225-229.


\bibitem{Wie}
E. Weinan. {\it  Stochastic hydrodynamics}, Current developments in mathematics,  109-147, Int. Press, Somerville, 2000.



\end{thebibliography}
\end{document}